\newcommand{\myauthor}{Benjamin Antieau and Kenneth Chan}
\newcommand{\mytitle}{Maximal orders in unramified central simple algebras}
\newcommand{\pdftitle}{\mytitle}
\author{Benjamin Antieau\footnote{Benjamin Antieau was supported by NSF Grant DMS-1358832.}~ and Kenneth Chan}
\title{\mytitle}
\definecolor{todo}{rgb}{1,0,0}
\definecolor{conditional}{rgb}{0,1,0}
\definecolor{e-mail}{rgb}{0,.40,.80}
\definecolor{reference}{rgb}{.20,.60,.22}
\definecolor{mrnumber}{rgb}{.80,.40,0}
\definecolor{citation}{rgb}{0,.40,.80}
\DeclareMathOperator{\codim}{codim}
\DeclareMathOperator{\hdim}{hdim}
\DeclareMathOperator{\height}{ht}
\DeclareMathOperator{\depth}{depth}
\newcommand{\we}{\simeq}
\newcommand{\iso}{\cong}
\newcommand{\Gm}{\mathds{G}_{m}}
\renewcommand{\epsilon}{\varepsilon}
\newcommand{\QCoh}{\mathrm{QCoh}}
\newcommand{\Coh}{\mathrm{Coh}}
\DeclareMathOperator{\Spec}{Spec}
\DeclareMathOperator{\Hoh}{H}
\DeclareMathOperator{\Ext}{Ext}
\newcommand{\ShTw}{\mathbf{Tw}}
\newcommand{\StEnd}{\mathscr{E}\mathrm{nd}}
\newcommand{\StHoh}{\mathscr{H}}
\newcommand{\et}{\mathrm{\acute{e}t}}
\newcommand{\fppf}{\mathrm{fppf}}
\DeclareMathOperator{\Br}{Br}
\newcommand{\Mrm}{\mathrm{M}}
\newcommand{\Brm}{\mathrm{B}}
\newcommand{\Fscr}{\mathscr{F}}
\newcommand{\Oscr}{\mathscr{O}}
\newcommand{\Sscr}{\mathscr{S}}
\newcommand{\Zscr}{\mathscr{Z}}
\newcommand{\Ascr}{\mathscr{A}}
\newcommand{\Lscr}{\mathscr{L}}
\newcommand{\Escr}{\mathscr{E}}
\newcommand{\Hscr}{\mathscr{H}}
\newcommand{\Xscr}{\mathscr{X}}
\newcommand{\Gscr}{\mathscr{G}}
\theoremstyle{plain}
\newtheorem{theorem}{Theorem}[section]
\newtheorem{lemma}[theorem]{Lemma}
\newtheorem{proposition}[theorem]{Proposition}
\theoremstyle{definition}
\newtheorem{definition}[theorem]{Definition}
\newtheorem{assumption}[theorem]{Assumption}
\newtheorem{remark}[theorem]{Remark}
\let\oldmarginpar\marginpar
\renewcommand\marginpar[1]{\-\oldmarginpar[\raggedleft\footnotesize #1]%
{\raggedright\footnotesize #1}}
\begin{document}
\maketitle

\begin{abstract}
    \noindent
    Using depth of coherent sheaves on noetherian algebraic stacks,
    we construct non-Azumaya maximal orders in unramified central simple algebras over
    schemes of dimension at least $3$.

    \paragraph{Key Words.}Maximal orders, depth, algebraic stacks.

    \paragraph{Mathematics Subject Classification 2010.}
    \href{http://www.ams.org/mathscinet/msc/msc2010.html?t=16Hxx&btn=Current}{16H10},
    \href{http://www.ams.org/mathscinet/msc/msc2010.html?t=13Cxx&btn=Current}{13C15},
    \href{http://www.ams.org/mathscinet/msc/msc2010.html?t=16Exx&btn=Current}{14D23}.
\end{abstract}

Let $X$ be a regular noetherian integral scheme, and assume that $\dim(X)\leq 2$.
Let $A$ be a central simple algebra over the function field $K$ of $X$ with class
$\alpha\in\Br(X)\subseteq\Br(K)$.
Auslander and Goldman showed in~\cite{auslander-goldman} that every maximal order $\Ascr$ in
$A$ is in fact an Azumaya algebra. What happens in higher dimensions has remained unexplored. The
following theorem provides a stark contrast to their result.  

\begin{theorem}\label{thm:existence}
    Suppose that $X$ is a Japanese integral noetherian scheme with function field $K$
    and a regular point of codimension $\geq 3$.
    Let $\alpha\in\Br(X)$ be a Brauer class, and let $A$ be a central
    simple algebra with Brauer class $\overline{\alpha}\in\Br(K)$. If $\deg(A)\geq 2$,
    then there exist non-Azumaya maximal orders on $X$ in $A$.
\end{theorem}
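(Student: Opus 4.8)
The plan is to realize the maximal orders of interest as pushforwards of Azumaya algebras from a punctured neighborhood of $x$, and to detect the failure of the Azumaya property through the depth of the resulting reflexive sheaf of algebras. First I would pass to the $\Gm$-gerbe $\pi\colon\Xscr\to X$ representing $\alpha$, so that $\alpha$-twisted coherent sheaves on $X$ become honest coherent sheaves on the noetherian algebraic stack $\Xscr$. Under this dictionary an Azumaya algebra of class $\alpha$ is $\StEnd_{\Xscr}(\Escr)$ for a twisted vector bundle $\Escr$, while for any reflexive twisted sheaf $\Mscr$ the sheaf $\StEnd_{\Xscr}(\Mscr)$ is a reflexive (that is, $S_2$) sheaf of $\OX$-orders whose generic fibre lies in the Brauer class $\overline\alpha$. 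The first reduction is therefore to produce a twisted bundle whose endomorphism order has generic fibre isomorphic to $A$ and which fails to be locally free at $x$.

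Next I would localize at the regular point: set $R=\Oscr_{X,x}$, a regular local ring with $\dim R\geq 3$, and let $j\colon U=\Spec R\smallsetminus\{\mathfrak m\}\hookrightarrow\Spec R$ be the inclusion of the punctured spectrum, together with its counterpart on gerbes. On $U$ I would choose a twisted vector bundle $\Escr$ with $\StEnd(\Escr)\otimes_R K\cong A$ and then set $\Ascr:=j_*\StEnd(\Escr)$. Two formal points make $\Ascr$ a maximal order. Because $\StEnd(\Escr)$ is locally free on $U$ and $\codim_{\Spec R}\{\mathfrak m\}\geq 2$, the pushforward $\Ascr=j_*\StEnd(\Escr)$ is $S_2$, hence reflexive, hence an order with $\Ascr\otimes_R K\cong A$; and an $S_2$ order is maximal as soon as it is maximal at every height-one prime, which holds here since over the discrete valuation ring $R_{\mathfrak p}$ a reflexive module over the Azumaya algebra is free, so $\Ascr_{\mathfrak p}$ is Azumaya and a fortiori maximal. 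Globalizing is then routine: the same construction over a neighborhood of $x$, or the reflexive hull of a coherent extension, yields a maximal order on $X$ in $A$.

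The crux is to choose $\Escr$ on $U$ so that $\Ascr$ is genuinely non-Azumaya at $x$, and this is exactly a depth computation on $\Xscr$. Since $\Ascr$ is already reflexive we have $\depth_{\mathfrak m}\Ascr\geq 2$, and by Auslander--Buchsbaum $\Ascr$ is locally free --- equivalently Azumaya --- at $x$ if and only if $\depth_{\mathfrak m}\Ascr=\dim R$; so it suffices to arrange $\depth_{\mathfrak m}\Ascr=2<\dim R$, i.e.\ $H^{2}_{\mathfrak m}(\Ascr)\neq 0$. This is where the hypotheses are used. A twisted bundle of rank one on $U$ forces $\alpha|_U=0$ and has $\StEnd(\Escr)\cong\Oscr_U$, which extends across $\mathfrak m$ and yields a split Azumaya algebra; and since $\Pic(U)=\Cl(R)=0$ no twist by a line bundle can change this. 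One therefore needs $\Escr$ of rank $\geq 2$, and the assumption $\deg(A)\geq 2$ supplies it: if $\alpha\neq 0$ the minimal twisted bundle already has rank equal to the index $\geq 2$ (so the division-algebra case $A=D$ is covered by the twist itself), while if $\alpha=0$ then $A\cong\Mat_m(K)$ with $m\geq 2$ and $\Escr$ is an ordinary bundle of rank $m$. Concretely I would build $\Escr$ as a second syzygy, formed in the category of twisted sheaves on $\Xscr$, of a twisted sheaf supported at $x$, so that $\Escr$ is locally free on $U$ with $\depth_{\mathfrak m}\Escr=2$, and then propagate this depth through $\StEnd$ and $j_*$.

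The main obstacle is precisely this last construction and verification: exhibiting a twisted bundle on the punctured spectrum whose endomorphism order has nonvanishing local cohomology in degree $2$, so that reflexivity is preserved (forcing maximality) while local freeness fails (forcing non-Azumayaness), and controlling these depths rigorously requires the theory of depth for coherent sheaves on the noetherian algebraic stack $\Xscr$ developed earlier, together with the regularity of $x$ to invoke both $S_2\Leftrightarrow\text{reflexive}$ and Auslander--Buchsbaum. The remaining technical point is the bookkeeping that pins the generic fibre to $A$ itself rather than merely to its Brauer class, uniformly across the division-algebra and matrix cases.
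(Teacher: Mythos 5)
Your overall framework---pass to the $\Gm$-gerbe, localize at the regular point, detect failure of local freeness of an endomorphism order via depth and Auslander--Buchsbaum---matches the paper's. But there is a genuine gap at the heart of the construction, namely the step you defer as ``bookkeeping that pins the generic fibre to $A$ itself.'' That is not bookkeeping; it is the main content of the proof. A second syzygy (in twisted sheaves) of a twisted sheaf supported at $x$ has the form $\Omega^2(R/M)\otimes_R\Fscr$ up to summands, where $\Fscr$ is a twisted bundle of rank $n=\deg(A)$; its rank is $r\cdot n$ with $r=\operatorname{rank}\Omega^2\geq 2$, so its endomorphism algebra is an order in $\Mrm_r(A)$, not in $A$. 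When $A$ is a division algebra---the essential case after the reduction via $\Mrm_m(\Ascr)$---no syzygy-type construction of this shape can produce a reflexive non-locally-free twisted sheaf of the minimal rank $n$. The paper's solution is precisely here: it takes $\Escr=\operatorname{coker}\bigl(\Fscr\xrightarrow{(f,g)^{T}}\Fscr^2\bigr)$, which has rank exactly $n$, with $f=t_1x+t_2$, $g=t_3y$ for non-commuting units $x,y\in\Ascr$ and $(t_1,t_2,t_3)$ of codimension $3$; the noncommutativity is what lets the common vanishing locus of $(f,g)$ have codimension $3$ rather than $1$, giving reflexivity (via the twisted Auslander--Buchsbaum formula in height $>2$ and local freeness in height $\leq 2$) without local freeness at $M$. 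Your proposal contains no mechanism playing this role.

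Two secondary points. First, your plan to conclude non-Azumayaness by computing $\depth_M\StEnd(\Escr)=2$ from $\depth_M\Escr=2$ is not justified: the depth of $\StHom(\Escr,\Escr)$ is not determined by the depth of $\Escr$ (e.g.\ $\depth\StHom(\Mscr,\Nscr)\geq\min(2,\depth\Nscr)$ gives only a lower bound). The paper instead descends along an \'etale splitting of the gerbe and invokes Auslander--Goldman's Theorem~4.4, which says directly that $\End(M)$ is not locally free when $M$ is reflexive but not locally free over a regular local ring. Second, your globalization step (``reflexive hull of a coherent extension'') implicitly assumes $X$ is normal, which is not hypothesized; the paper instead spreads out the local order and embeds it in a maximal order using Yu's theorem, which is exactly where the Japanese hypothesis is used, and then notes that any such maximal order localizes at $p$ to the already-maximal local order.
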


Recall that an integral locally noetherian scheme $X$ is Japanese
if for every non-empty affine open $\Spec R\subseteq X$
the ring $R$ is Japanese. A noetherian domain $R$ with function field $K$ is Japanese if for every finite
field extension $K\subseteq L$ the integral closure $S$ of $R$ in $L$ is a finitely
generated $R$-module. 
This condition holds for integral quasi-excellent schemes, and hence for almost
all rings that one encounters in practice. Similarly, if $X$ is quasi-excellent, then the
regular locus of $X$ is open, so that the existence of a regular point of codimension at
least $3$ can be determined by the codimension of the non-regular locus.

We use Yu's result~\cite{yu} that every order over a Japanese scheme is contained in a
maximal order, while the regular point hypothesis seems to be the easiest way to
ensure that the endomorphism algebra of a non-locally free coherent sheaf is not Azumaya.

To prove the theorem, we will briefly develop some notions of depth and reflexivity for coherent
sheaves on an algebra stack. These are straightforward generalizations of depth and
reflexivity for schemes, but we do not know of a reference for what we need. An alternate
route would be to take a more algebraic approach and describe depth for modules over Azumaya
algebras. In any case, we prove a local depth criterion for reflexivity on algebraic stacks.
Once in hand, this criterion will let us check that certain $\Xscr$-twisted coherent sheaves
of global dimension $1$ are in fact reflexive. Taking the endomorphism algebras of these
modules produces the desired orders.

More specifically, we can assume that $\alpha$ is represented by an Azumaya algebra $\Ascr$
that is a maximal order in $A$, for otherwise there already exists a non-Azumaya maximal order.
We can also assume that $X$ is the spectrum of a regular local ring of dimension at least
$3$. Indeed, any maximal order we construct over such a local ring will extend to a maximal
order over the entire scheme under the hypothesis of the theorem.
Then, $\Ascr\iso\StEnd(\Escr)$, where $\Escr$ is an
$\Xscr$-twisted locally free sheaf of rank $n>1$ and $\Xscr$ is a $\Gm$-gerbe 
representing $\alpha$. For general
$f,g\in\Ascr$, we prove that the $\Xscr$-twisted sheaf $\Fscr$ with
presentation
\begin{equation*}
    0\rightarrow\Escr\xrightarrow{\begin{pmatrix}f\\g\end{pmatrix}}\Escr^2\rightarrow\Fscr\rightarrow 0
\end{equation*}
is reflexive but not locally free. Indeed, since $\Ascr$ is noncommutative we can take $f,g$
locally noncommuting homomorphisms, but where the vanishing locus of $(f,g)$ has codimension $3$ in $X$.
Then, $\StEnd(\Fscr)$ is an example of a non-Azumaya maximal order.

This paper arose out of a desire to better understand the examples constructed by
Antieau and Williams in~\cite{aw4}. They gave an example of a $6$-dimensional smooth
complex affine variety $X$ and a Brauer class $\alpha\in\Br(X)$ with the following
properties: the class $\alpha$ is represented by a degree $2$ division algebra $D$ over the
function field $K$, and no maximal order in $D$ over $X$ is Azumaya. The methods
in~\cite{aw4} are topological in nature, and this paper is a first step in attempting to
understand how to construct purely algebraic examples and to answer the question of whether
this phenomenon can occur in dimension $3$.

\begin{remark}
    The theorem is false if $\deg(A)=1$. Indeed, if $X$ is normal and $\Ascr$ is a maximal order in $K$,
    then $\Ascr$ is in particular reflexive, and hence normal
    by~\cite{hartshorne-reflexive}*{Proposition~1.6}. This means that if $U\subseteq X$ is
    an open subset with $\mathrm{codim}_X(X-U)\geq 2$, then $\Ascr(X)\rightarrow\Ascr(U)$ is
    an isomorphism. Since there is such a $U$ with $\Ascr_U$ Azumaya, we find that
    $\Oscr_X(U)\rightarrow\Ascr(U)$ is an isomorphism for this choice of $U$, which implies that
    $\Oscr_X\rightarrow\Ascr$ is an isomorphism.
\end{remark}

In the final section of the paper, we return to the Auslander-Goldman result mentioned
above, namely that all maximal orders in unramified central simple algebras on regular
$2$-dimensional schemes are Azumaya. We show that this property in fact characterizes
regular integral $2$-dimensional schemes.

\begin{theorem}\label{thm:surfaces}
    Let $X$ be a $2$-dimensional integral noetherian surface with field of fractions $K$.
    Then, $X$ is regular if and only if every maximal order
    over $X$ in a central simple $K$-algebra with unramified Brauer class is Azumaya.
\end{theorem}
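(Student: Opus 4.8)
The plan is to treat the two implications separately: the forward direction is the cited theorem of Auslander and Goldman, and the reverse direction is the new content, which I would prove in the contrapositive. If $X$ is regular, then it is a regular integral noetherian scheme of dimension $2$, and the result of Auslander and Goldman~\cite{auslander-goldman} quoted in the introduction applies verbatim: every maximal order over $X$ in a central simple $K$-algebra with unramified Brauer class is Azumaya. So only the reverse implication requires work, namely: if $X$ is \emph{not} regular, then there is a maximal order over $X$ in some split (hence unramified) central simple $K$-algebra that is not Azumaya.

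I would first reduce to a local question. Since being Azumaya is a stalk-local condition, it suffices to construct a coherent sheaf of maximal orders on $X$ whose stalk at a chosen non-regular point fails to be Azumaya. Pick $x$ in the non-empty non-regular locus and set $R=\Oscr_{X,x}$, a $2$-dimensional non-regular local domain with fraction field $K$. If $R$ is not normal, the normalization is already the example: the integral closure of $\Oscr_X$ in $K$ is a maximal order in the degree-$1$ algebra $K$, it is coherent under the standing finiteness hypotheses, and it is not isomorphic to $\Oscr_X$ precisely because $X$ is non-normal at $x$; since a degree-$1$ Azumaya algebra is just $\Oscr_X$ itself, this maximal order is not Azumaya. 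This is consistent with the Remark, where the degree-$1$ obstruction is seen to vanish exactly when $X$ fails to be normal.

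The substantive case is when $R$ is normal but not regular; then $R$ is Cohen--Macaulay of dimension $2$ with an isolated singularity at $x$, and I must use a genuinely noncommutative, higher-degree algebra. The construction takes a reflexive $R$-module $M$ of positive rank $n$ and forms $\End_R(M)$, an order in $\End_K(M\otimes_R K)\iso\Mat_n(K)$. Two facts drive the argument. First, $\End_R(M)$ is automatically a maximal order: it is reflexive as an $R$-module (a $\Hom$ into a reflexive module over a normal domain is reflexive), and at every height-one prime $p$ the ring $R_p$ is a discrete valuation ring over which $M_p$ is free, so $\End_{R_p}(M_p)\iso\Mat_n(R_p)$ is maximal; an order over a normal domain that is reflexive and maximal in codimension one is maximal, since any order containing it agrees with it at all height-one primes and is therefore contained in its reflexive hull, which is the order itself. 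Second, $\End_R(M)$ is Azumaya if and only if $M\iso M_0^{\oplus n}$ for some rank-one reflexive module $M_0$: one direction is clear, and for the other I use that $\End_R(M)\otimes_R K$ is split, so the Brauer class of $\End_R(M)$ lies in the kernel of the injection $\Br(R)\hookrightarrow\Br(K)$ and hence $\End_R(M)\iso\Mat_n(R)$, after which a decomposition of the identity into matrix units exhibits $M$ as $M_0^{\oplus n}$ with $M_0$ rank-one reflexive.

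It then remains to exhibit, over a non-regular normal $R$, a reflexive module that is \emph{not} a direct sum of copies of a single rank-one reflexive module; this is the crux. When the divisor class group $\Cl(R)$ is non-trivial I would take $M=R\oplus N$ with $N$ a non-free rank-one reflexive module, so that $\End_R(M)$ contains $N$ as an $R$-module summand and is not even locally free, hence not Azumaya. The delicate case is $\Cl(R)=0$, which genuinely occurs for singular normal local rings such as certain hypersurface singularities that are unique factorization domains; here every rank-one reflexive module is free, the condition $M\iso M_0^{\oplus n}$ collapses to $M$ being free, and it suffices to produce a single non-free reflexive module. Since $R$ is not regular it has infinite global dimension, so the second syzygy $\Omega^2(k)$ of the residue field is a second syzygy of positive rank that is not free, and is reflexive over the normal domain $R$; taking $M=\Omega^2(k)$ makes $\End_R(M)$ a non-Azumaya maximal order. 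Finally I would globalize: spreading $M$ out to a coherent sheaf near $x$ and replacing it by its reflexive hull on the normal scheme $X$ yields a reflexive coherent sheaf $\Mscr$ with $\Mscr_x\iso M$, and $\StEnd(\Mscr)$ is then a coherent sheaf of maximal orders on $X$ whose stalk at $x$ is the non-Azumaya algebra $\End_R(M)$. I expect the main obstacle to be exactly the identification of when $\End_R(M)$ is Azumaya together with the case $\Cl(R)=0$, where the rank-one construction fails and one is forced to invoke a higher-rank non-free reflexive module.
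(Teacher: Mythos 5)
Your overall architecture agrees with the paper's: the forward direction is Auslander--Goldman, and for the converse you localize at a non-regular point, dispose of the non-normal case with the normalization, and in the normal case take endomorphisms of a non-free reflexive (equivalently, maximal Cohen--Macaulay) module, checking maximality via reflexivity plus maximality in codimension one. Your non-normal case and your $\Cl(R)\neq 0$ case are exactly the paper's argument: the paper takes any non-projective maximal Cohen--Macaulay module $M$ (produced via Buchweitz's equivalence $\mathrm{D}_{\mathrm{sg}}(X)\cong\underline{\mathrm{MCM}}(R)$, though your $\Omega^2(k)$ serves the same purpose) and forms $\End_R(R\oplus M)$, which contains $M$ as an $R$-module direct summand and so cannot be locally free, hence cannot be Azumaya.

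The gap is in your $\Cl(R)=0$ branch. There you work with $\End_R(M)$ itself and rule out the Azumaya possibility by the chain: Azumaya, plus split generic fiber, plus ``the injection $\Br(R)\hookrightarrow\Br(K)$'', gives $\End_R(M)\iso\Mat_n(R)$, and then $M\iso M_0^{\oplus n}$ by Morita. But the injectivity of $\Br(R)\to\Br(K)$ is the Auslander--Goldman theorem for \emph{regular} domains; you are invoking it over a normal local ring that is by hypothesis \emph{not} regular, which is exactly the situation under dispute, and you offer no argument for it there. Without that input you only know that $\End_R(M)$ is a possibly non-split Azumaya algebra whose generic fiber is split, and the Morita step collapses. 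The repair is to drop the case division on $\Cl(R)$ altogether: the trick you already use when $\Cl(R)\neq 0$ applies verbatim with $N$ replaced by any non-free reflexive module such as $\Omega^2(k)$. Indeed $\End_R(R\oplus M)\iso R\oplus M\oplus M^{\ast}\oplus\End_R(M)$ contains the non-free module $M$ as a direct summand, so it is not free over the local ring $R$ and hence not Azumaya, while it remains reflexive and maximal in codimension one, hence a maximal order in the split algebra $\End_K(K\oplus V)$. This is precisely the paper's route, and it sidesteps the Brauer-group injectivity question entirely.
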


\paragraph{Acknowledgements.} We thank Max Lieblich for showing us the second example in
Section~\ref{examples}.

\section{\label{examples}Two examples}

The starting points of our investigation were the following two examples, showing that, at
least in certain cases, Theorem~\ref{thm:existence} holds for regular noetherian schemes.

Consider the example of~\cite{aw4}. There is a smooth affine
complex scheme $X$ with $\dim(X)=6$ and a Brauer class $\alpha\in\Br(X)$ such that there are
no Azumaya maximal orders in the degree $2$ division algebra over $K$ representing $\alpha$.
On the other hand, by Yu~\cite{yu}, there are maximal orders $\Ascr$ in $D$,
because $X$ is normal and noetherian. By construction, these are not Azumaya. The
non-Azumaya locus of such an order $\Ascr$ in $X$ is closed and has codimension at least $3$. We can
localize at a closed point in the non-Azumaya locus to obtain examples over regular noetherian
local rings of dimension $6$.

A more geometric example was explained to us by Max Lieblich. Let $S$ be a smooth
projective surface over an algebraically closed field $k$, and let $\Sscr\rightarrow S$ be a
$\mu_n$-gerbe, where $n$ is prime to the characteristic of $k$. The moduli space
$\ShTw(n,L,c)$ of semi-stable torsion-free $\Sscr$-twisted sheaves of rank $n$, determinant
$L$, and second Chern class $c$ is proper, and the open locus of locally free sheaves inside is
properly contained in $\ShTw(n,L,c)$ for $c$ sufficiently large. It follows from
the valuative criterion that there is a
discrete valuation ring $R$ and a map $\Spec R\rightarrow\ShTw(n,L,c)$ sending to the
generic point to the locally free locus and the closed point to the boundary. This
classifies a torsion-free but non-locally free $\Sscr$-twisted sheaf $\Escr$ on $X_R$, which
one can check is reflexive using depth. Taking the endomorphism algebra $\StEnd(\Escr)$ yields a
maximal order over the $3$-dimensional scheme $X_R$, which is non-Azumaya by~\cite{auslander-goldman-maximal}*{Theorem~4.4}.

\section{Local cohomology and depth on an algebraic stack}

We begin by briefly recalling some preliminaries on algebraic stacks, and then we prove that the
depth criterion for reflexivity~\cite{hartshorne-reflexive}*{Proposition 1.6} holds in this setting. A good reference is the
book of Laumon and Moret-Bailly~\cite{laumon-moret-bailly} or, for another account,
see~\cite{stacks-project}. The following definitions are either standard, or are obvious extensions 
of the scheme-theoretic definitions pertaining to local cohomology which can be found, for example, in~\cite{hartshorne-local}.

\begin{definition}
    An \emph{algebraic stack} over a base scheme $S$ is a stack admitting a \emph{smooth atlas} $p:U\rightarrow\Xscr$,
    where $U$ is a scheme and $p$ is representable (in algebraic spaces), smooth and
    surjective, such that the diagonal morphism
    $\Xscr\rightarrow\Xscr\times_S\Xscr$ is representable.
\end{definition}

\begin{definition}
    We say that $\Xscr$ is \emph{locally noetherian} if it has a smooth atlas
    $p:U\rightarrow\Xscr$ where $U$ is locally noetherian.
\end{definition}

Given an algebraic stack
$\Xscr$, we study sheaves of $\Oscr_\Xscr$-modules, which are by definition sheaves of
$\Oscr_\Xscr$-modules on the associated ringed site $(\Xscr_{\mathrm{fppf}},\Oscr_\Xscr)$.

\begin{definition}
    An $\Oscr_\Xscr$-module $\Fscr$ is \emph{quasi-coherent} if for every $f:\Spec R\rightarrow\Xscr$ the
    pullback $f^*\Fscr$ is quasi-coherent. As usual, it is equivalent to ask for $p^*\Fscr$
    to be quasi-coherent where $p:U\rightarrow\Xscr$ is a smooth atlas.
    If $\Xscr$ is locally noetherian, an $\Oscr_\Xscr$-module $\Fscr$ is \emph{coherent}
    if $p^*\Fscr$ is coherent for some (and hence every) locally noetherian smooth atlas
    $p:U\rightarrow X$.
\end{definition}

\begin{lemma}[\cite{stacks-project}*{\href{http://stacks.math.columbia.edu/tag/0781}{Tag 0781}}]
    The abelian category $\QCoh(\Xscr)$ has enough injectives.
\end{lemma}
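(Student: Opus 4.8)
The plan is to recognize $\QCoh(\Xscr)$ as a \emph{Grothendieck abelian category} --- that is, a cocomplete abelian category in which filtered colimits are exact (axiom AB5) and which admits a set of generators --- and then to invoke Grothendieck's theorem that every such category has enough injectives. The whole point is that once the three structural properties (abelian and cocomplete, AB5, generating set) are verified, the existence of injective embeddings is automatic, so the work is entirely in checking these properties.

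First I would verify that $\QCoh(\Xscr)$ is abelian and cocomplete. Both the kernel and the cokernel of a morphism of quasi-coherent $\Oscr_\Xscr$-modules are again quasi-coherent, since quasi-coherence can be tested after pulling back along a smooth atlas $p\colon U\to\Xscr$ and $p^{*}$ is exact. The same reduction handles colimits: a colimit computed in $\Mod_{\Oscr_\Xscr}$ remains quasi-coherent because $p^{*}$ commutes with colimits and lands in $\QCoh(U)$, which is itself cocomplete. Exactness of filtered colimits then descends from the scheme $U$: since $p$ is faithfully flat, $p^{*}$ is exact and faithful, so a sequence in $\QCoh(\Xscr)$ is exact if and only if its $p^{*}$-image is, and $p^{*}$ preserves filtered colimits, which reduces AB5 for $\Xscr$ to the known case of the scheme $U$.

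The substantive step, and the one I expect to be the main obstacle, is the construction of a set of generators. Here I would pass to the presentation of $\Xscr$ by its atlas, identifying $\QCoh(\Xscr)$ with the category of cartesian quasi-coherent modules on the smooth groupoid $U_{1}\rightrightarrows U_{0}$ (equivalently, on the associated simplicial scheme) obtained from $p$. The category $\QCoh(U_{0})$ has a set of generators because $U_{0}$ is locally noetherian, and applying the left adjoint to the forgetful functor from cartesian modules to $\QCoh(U_{0})$ should carry this set to a generating set for $\QCoh(\Xscr)$. The difficulty is precisely in keeping this transport inside quasi-coherent sheaves and in checking that the resulting objects genuinely generate, which requires controlling pushforward along the structure maps of the groupoid; this is where local noetherianness is used, and it is exactly the content of the cited result \cite{stacks-project}*{\href{http://stacks.math.columbia.edu/tag/0781}{Tag 0781}}.

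Once AB5 and a set of generators are in hand, the general theory of Grothendieck abelian categories produces, for every object, a monomorphism into an injective object, which finishes the proof. An alternative, more hands-on route would sidestep the generator entirely: embed $p^{*}\Fscr$ into a quasi-coherent injective $\Jscr$ on $U$ and form the composite $\Fscr\to p_{*}p^{*}\Fscr\hookrightarrow p_{*}\Jscr$. Because $p$ is faithfully flat the adjunction unit is a monomorphism (its $p^{*}$-image is a split mono by the triangle identity, and $p^{*}$ reflects monomorphisms), and because $p^{*}$ is exact its right adjoint $p_{*}$ preserves injectives. The trouble with this approach is guaranteeing that $p_{*}\Jscr$ is quasi-coherent, which can fail without quasi-compactness hypotheses on the atlas, so the Grothendieck-category argument is the cleaner one in the generality needed here.
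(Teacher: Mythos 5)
Your proposal is correct and takes essentially the same route as the paper, which offers no argument of its own but simply cites \cite{stacks-project}*{Tag 0781}; that tag's proof is precisely the Grothendieck-abelian-category argument you outline (cocompleteness and AB5 checked via the atlas, a generating set produced from the groupoid presentation, then Grothendieck's existence theorem for injectives). Your closing remark about why the direct $p_*\Jscr$ construction is problematic without quasi-compactness is also accurate and explains why the cited source proceeds as you do.
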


Now we can give the definitions of local cohomology and depth.

\begin{definition}
    Given a closed substack $\Zscr\subseteq\Xscr$ and a quasi-coherent sheaf on $\Xscr$,
    we define $\StHoh^0_{\Zscr}(\Fscr)$, the \emph{sheaf of sections with support in
    $\Zscr$}, as the quasi-coherent sheaf
    \begin{equation*}
        (f:\Spec R\rightarrow\Xscr)\mapsto\Hoh^0_{\Spec R\times_{\Xscr}\Zscr}(f^*\Fscr).
    \end{equation*}
    The functor $\StHoh^0_{\Zscr}:\QCoh(\Xscr)\rightarrow\QCoh(\Xscr)$ is left-exact.
\end{definition}

\begin{definition}
    The \emph{local cohomology functors with supports in $\Zscr$} are $\StHoh^i(-)$, the right
    derived functors of $\StHoh^0_{\Zscr}(-)$. These can also be defined by sheafifying the
    local cohomology functors restricted to affine schemes.
\end{definition}

\begin{definition}
    The \emph{depth} of $\Fscr$ along $\Zscr$ is defined to be
    \begin{equation*}
        \depth_{\Zscr}\Fscr=\max\{n:\text{$\StHoh^i_{\Zscr}(\Fscr)=0$ for $i<n$}\}.
    \end{equation*}
    We will only apply this definition to coherent sheaves.
\end{definition}

It follows immediately from the definitions that we can compute depth on an atlas for
$\Xscr$. The goal is eventually to relate depth to reflexivity, to introduce another
notion of depth in the special case of gerbes, and to show that this secondary notion agrees
with the definition just given.

\begin{definition}
    Let $\Fscr$ be a coherent sheaf on a locally noetherian algebraic stack $\Xscr$. We say that $\Fscr$
    is \emph{reflexive} if the natural map $\Fscr\rightarrow\Fscr^{\vee\vee}$ is an isomorphism.
\end{definition}

Just as the depth can be computed on an atlas $p:U\rightarrow\Xscr$, reflexivity can also be
checked on $U$.

\begin{definition}
    \begin{enumerate}
        \item   An algebraic stack is \emph{irreducible} if there is a smooth atlas
            $p:U\rightarrow X$ with $U$ a disjoint union of integral schemes.
        \item   An algebraic stack $\Xscr$ is \emph{reduced} if $U$ is reduced for some (and
            hence every) smooth atlas $p:U\rightarrow\Xscr$.
        \item   An algebraic stack is \emph{integral} if it is reduced and irreducible.
        \item   An algebraic stack is \emph{normal} if $U$ is normal for some (and hence every)
            smooth atlas $p:U\rightarrow\Xscr$.
    \end{enumerate}
\end{definition}

The definition of an irreducible stack is somewhat touchy. For example, another definition
could be that the space of points of $\Xscr$ with the Zariski topology, as defined
in~\cite{laumon-moret-bailly}*{Chapter~5}, is irreducible. However, it is probably not the
case that such a stack admits a smooth atlas $p:U\rightarrow\Xscr$ with $U$ irreducible. The
definition we give suffices for the applications we have in mind below.

\begin{definition}
    A quasi-coherent sheaf $\Fscr$ on an algebraic stack is \emph{torsion-free} if
    $p^*\Fscr$ is torsion free for some (and hence every) smooth atlas
    $p:U\rightarrow\Xscr$.
\end{definition}

Recall that for a scheme $U$, a quasi-coherent sheaf $\Gscr$ is torsion-free if the stalk
$\Gscr_x$ is a torsion free $\Oscr_{U,x}$-module for each point of $U$. If $\Fscr$ is any
coherent sheaf on a locally noetherian algebraic stack, then the dual sheaf $\Fscr^\vee$ is
torsion-free. Indeed, since this is true on noetherian local rings, it is true on $\Xscr$.

\begin{lemma}\label{lem:torsionfree}
    If $\Xscr$ is locally noetherian and reduced, then
    a coherent sheaf $\Fscr$ is torsion-free if and only if $\Fscr\rightarrow\Fscr^{\vee\vee}$ is injective.
\end{lemma}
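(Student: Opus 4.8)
The plan is to reduce to the corresponding statement for coherent sheaves on a locally noetherian reduced scheme and then prove that affine-locally. Fix a locally noetherian smooth atlas $p:U\to\Xscr$; as $\Xscr$ is reduced, so is $U$. By definition $\Fscr$ is torsion-free exactly when $p^*\Fscr$ is. For the other condition, I would note that $p^*$ commutes with dualization: since $p$ is flat and $\Fscr$ is coherent on a locally noetherian stack, flat base change for $\StHom$ gives $p^*(\Fscr^\vee)\cong(p^*\Fscr)^\vee$, and iterating---using that duals of coherent sheaves are coherent---identifies the pullback of $\Fscr\to\Fscr^{\vee\vee}$ with $(p^*\Fscr)\to(p^*\Fscr)^{\vee\vee}$. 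Because $p$ is smooth and surjective, hence faithfully flat, the map $\Fscr\to\Fscr^{\vee\vee}$ is injective if and only if its pullback is. Thus both conditions in the lemma are detected on $U$, and it suffices to treat coherent sheaves on the reduced locally noetherian scheme $U$.

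Working affine-locally, let $R$ be a reduced noetherian ring with total quotient ring $Q$ and $M$ a finite $R$-module, with $M^\vee=\Hom_R(M,R)$. One direction is immediate from the remark preceding the lemma: any dual module is torsion-free, so $M^{\vee\vee}$ is torsion-free; if $M\to M^{\vee\vee}$ is injective, then $M$ is a submodule of a torsion-free module and hence torsion-free. For the converse, assume $M$ is torsion-free, i.e. that $M\to M\otimes_R Q$ is injective. Since $R$ is reduced and noetherian, $Q$ is a finite product of fields $Q\cong\prod_{i=1}^n K_i$, so $M\otimes_R Q\cong\prod_i(M\otimes_R K_i)$ is a finite product of finite-dimensional vector spaces and therefore embeds into $Q^N$ for $N=\max_i\dim_{K_i}(M\otimes_R K_i)$. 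Composing gives $M\hookrightarrow Q^N$, and since $M$ is finitely generated I can clear a single non-zero-divisor denominator $d\in R$ to land inside $(\tfrac1d R)^N\cong R^N$, yielding an $R$-linear injection $\iota:M\hookrightarrow R^N$. The coordinate functionals $\pi_j\circ\iota\in M^\vee$ then jointly separate the points of $M$, so no nonzero $m$ is annihilated by all of $M^\vee$; hence $M\to M^{\vee\vee}$ is injective. This finishes the scheme-level claim and therefore the lemma.

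The only genuinely delicate point is this converse at the ring level, and in particular the passage from an integral domain to a merely reduced ring. I expect the main obstacle to be the bookkeeping with the total quotient ring: one must use that for reduced noetherian $R$ the ring $Q$ is a product of fields, so that $M\otimes_R Q$ embeds into some $Q^N$, and that a common non-zero-divisor denominator exists precisely because $M$ is finitely generated. The remaining ingredients---the compatibility of $p^*$ with duals and the faithfully flat descent of injectivity---are formal.
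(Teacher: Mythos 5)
Your proof is correct, and the overall skeleton matches the paper's: both reduce to a locally noetherian reduced atlas $U$ using flatness of $p$ (compatibility of $p^*$ with duals, faithfully flat detection of injectivity and of torsion-freeness), and both handle the ``if'' direction by observing that $\Fscr^{\vee\vee}$ is torsion-free and that subsheaves of torsion-free sheaves are torsion-free. Where you diverge is in the converse. The paper argues that, since $U$ is reduced, the map $p^*\Fscr\to(p^*\Fscr)^{\vee\vee}$ becomes an isomorphism after restriction to the scheme of generic points of $U$ (where everything is a vector space over a field), so its kernel is a torsion submodule of the torsion-free sheaf $p^*\Fscr$ and hence vanishes. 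You instead prove the stronger classical fact that a finite torsion-free module $M$ over a reduced noetherian ring embeds into a finite free module, by passing to the total quotient ring $Q\cong\prod_i K_i$, embedding $M\otimes_R Q$ into $Q^N$, and clearing a common non-zero-divisor denominator; the coordinate functionals then separate points, giving injectivity of $M\to M^{\vee\vee}$. Both arguments are sound and use reducedness in the same essential way (so that the generic stalks, equivalently $Q$, are products of fields). The paper's version is shorter because it only needs the kernel to be torsion; yours is more explicit and yields the embedding $M\hookrightarrow R^N$ as a byproduct, at the cost of the denominator-clearing bookkeeping you correctly flag as the delicate step.
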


\begin{proof}
    As subsheaves of torsion-free sheaves are torsion-free, and since $\Fscr^{\vee\vee}$ is
    torsion-free, we see that the condition is sufficient. So, suppose that $\Fscr$ is
    torsion-free. Then, $p^*\Fscr$ is torsion-free for some smooth atlas
    $p:U\rightarrow\Xscr$. Since $U$ is reduced, the canonical map $p^*\Fscr\rightarrow
    p^*\Fscr^{\vee\vee}$ is an isomorphism when restricted to the scheme of generic points
    of $U$. In particular, the kernel of this map is a torsion submodule of $p^*\Fscr$.
\end{proof}

\begin{proposition}[Hartshorne~\cite{hartshorne-reflexive}*{Proposition~1.6}]\label{prop:stackyhart}
    Let $\Fscr$ be a coherent sheaf on a normal integral locally noetherian algebraic stack $\Xscr$. Then, $\Fscr$ is
    reflexive if and only if it is torsion-free and $\Hscr^1_{\Zscr}(\Fscr)=0$ for all closed substacks
    $\Zscr\subseteq\Xscr$ with $\codim_{\Xscr}\Zscr\geq 2$.
\end{proposition}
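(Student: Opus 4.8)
The plan is to reduce the statement to Hartshorne's scheme-theoretic result \cite{hartshorne-reflexive}*{Proposition~1.6} by descending all three conditions to a smooth atlas. Since $\Xscr$ is normal and integral, I can choose an atlas $p\colon U\to\Xscr$ with $U$ a disjoint union of integral normal locally noetherian schemes; as $p$ is smooth and surjective, it is faithfully flat. Hartshorne's proposition applies to each connected component of $U$, so it suffices to show that $\Fscr$ is reflexive (respectively torsion-free, respectively satisfies the codimension-$2$ vanishing) if and only if $p^*\Fscr$ is. Reflexivity and torsion-freeness are already known to be detected on $U$: the former by the remark following the definition of reflexivity, the latter by definition. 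For the reflexivity transfer one uses that flatness of $p$ gives $p^*(\Fscr^{\vee\vee})\iso(p^*\Fscr)^{\vee\vee}$ compatibly with the biduality map, so that faithful flatness detects whether $\Fscr\to\Fscr^{\vee\vee}$ is an isomorphism.

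Next I would record the local cohomology dictionary. The definition of $\Hscr^i_{\Zscr}$ by sheafifying the affine-local functors yields flat base change $p^*\Hscr^i_{\Zscr}(\Fscr)\iso\Hoh^i_{p^{-1}\Zscr}(p^*\Fscr)$, and faithful flatness again detects vanishing on $U$. The main obstacle is matching the two families of vanishing conditions: on $\Xscr$ one quantifies over closed substacks $\Zscr$ of codimension $\geq 2$, whereas Hartshorne on $U$ quantifies over \emph{all} closed subsets of codimension $\geq 2$, and a general such subset need not have the form $p^{-1}\Zscr$. To bridge this I would pass to the pointwise reformulation: for a coherent sheaf $\Gscr$ on a locally noetherian scheme, $\Hoh^1_Z(\Gscr)=0$ for every closed $Z$ of codimension $\geq 2$ if and only if $\depth_{\Oscr_{U,x}}\Gscr_x\geq 2$ at every point $x$ with $\dim\Oscr_{U,x}\geq 2$, a purely local Serre $S_2$-type condition. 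The analogous reformulation holds on $\Xscr$, since depth and codimension are computed on the atlas.

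The compatibility of these pointwise conditions across $p$ is where I expect to spend the most effort. Because $p$ is smooth it preserves codimension of points, and, having regular (hence Cohen--Macaulay) fibers, it satisfies $\depth_{\Oscr_{U,x}}(p^*\Fscr)_x\geq 2$ exactly when $\depth_{\Oscr_{\Xscr,p(x)}}\Fscr_{p(x)}\geq 2$ at the relevant points; surjectivity ensures every codimension-$\geq 2$ point of $\Xscr$ is reached. Thus the pointwise depth conditions on $\Xscr$ and on $U$ agree, and the two families of vanishing statements coincide. With the dictionary in hand the proof concludes formally: $\Fscr$ is reflexive if and only if $p^*\Fscr$ is, if and only if (by Hartshorne on $U$) $p^*\Fscr$ is torsion-free and satisfies codimension-$2$ local cohomology vanishing, if and only if $\Fscr$ is torsion-free and $\Hscr^1_{\Zscr}(\Fscr)=0$ for all substacks $\Zscr$ with $\codim_{\Xscr}\Zscr\geq 2$. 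The one delicate point, and the crux of the argument, is the behavior of depth under the smooth atlas together with the $S_2$ reformulation; everything else is faithfully flat descent.
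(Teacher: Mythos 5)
Your proposal is correct in outline, but it takes a genuinely different route from the paper on the harder (``if'') direction. For the ``only if'' direction both arguments are the same: pull back along a faithfully flat atlas $p\colon U\to\Xscr$ with $U$ a disjoint union of normal integral schemes, use flat base change for local cohomology on $\Zscr_U=p^{-1}\Zscr$, and invoke Hartshorne's schematic statement. The divergence is in how each handles the quantifier mismatch you correctly identify --- that Hartshorne on $U$ requires vanishing for \emph{all} closed subsets of codimension $\geq 2$, not only those of the form $p^{-1}\Zscr$. You confront it head-on via the pointwise $S_2$ dictionary ($\Hoh^0_Z=\Hoh^1_Z=0$ for all $Z$ of codimension $\geq 2$ iff $\depth\Gscr_x\geq 2$ at all points of codimension $\geq 2$) together with the depth formula for flat local maps with regular fibers. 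The paper sidesteps the issue entirely: assuming $\Fscr$ torsion-free but not reflexive, the cokernel $\Gscr$ of $\Fscr\to\Fscr^{\vee\vee}$ is a nonzero coherent sheaf whose support is a \emph{specific} closed substack $\Zscr$ of codimension $\geq 2$ (torsion-free sheaves on a normal scheme are locally free in codimension $1$), and the long exact sequence gives an injection $\Gscr=\Hscr^0_{\Zscr}(\Gscr)\hookrightarrow\Hscr^1_{\Zscr}(\Fscr)$, so only that one substack is needed. The paper's argument is shorter and needs no depth machinery; yours is more systematic and would generalize. One place your sketch needs more care: a point $x\in U$ of codimension $\geq 2$ may lie over a point $\xi=p(x)$ of codimension $0$ or $1$ in $\Xscr$, so the depth conditions at $x$ and at $\xi$ are not ``exactly'' equivalent; rather $\depth(p^*\Fscr)_x=\depth\Fscr_\xi+d$ where $d$ is the codimension of $x$ in its (regular) fiber, and you must use torsion-freeness (giving $\depth\Fscr_\xi\geq 1$ at codimension-$1$ points) to cover the cases $\codim(\xi)\leq 1$. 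With that supplement the argument closes.
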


\begin{proof}
    If $\Fscr$ is reflexive, it is torsion-free by Lemma~\ref{lem:torsionfree}. Hence, $\Hscr^0_\Zscr(\Fscr)=0$ for all
    proper closed substacks $\Zscr\subseteq\Xscr$. Let $p:U\rightarrow\Xscr$ be a smooth
    atlas where $U$ is a disjoint union of integral (normal, locally noetherian) schemes.
    By definition, $\Zscr\subseteq\Xscr$ has $\codim_\Xscr\Zscr\geq 2$, if $\codim_U\Zscr_U\geq 2$, where
    $\Zscr_U=U\times_\Xscr\Zscr$. Hence, since $p^*\Fscr$ is reflexive, the schematic
    version of the present proposition~\cite{hartshorne-reflexive}*{Proposition 1.3} implies that
    $p^*\Hscr^1_\Zscr(\Fscr)\iso\Hscr^1_{\Zscr_U}(p^*\Fscr)=0$. As $p$ is faithfully flat,
    this shows that $\Hscr^1_{\Zscr}(\Fscr)=0$.

    Now, suppose that $\Fscr$ is torsion-free. If $\Fscr$ is not reflexive, then the
    cokernel $\Gscr$ of $\Fscr\rightarrow\Fscr^{\vee\vee}$ is a non-zero coherent sheaf on
    $\Xscr$. Since all torsion-free sheaves on a normal scheme are locally free in
    codimension $1$, it follows that the support of $\Gscr$ is a closed substack $\Zscr$
    codimension at least $2$. The long exact sequence in local cohomology yields
    \begin{equation*}
        0\rightarrow\Hscr^0_{\Zscr}(\Gscr)\rightarrow\Hscr^1_{\Zscr}(\Fscr)\rightarrow\Hscr^1_{\Zscr}(\Fscr^{\vee\vee}),
    \end{equation*}
    since $\Hscr^0_{\Zscr}(\Fscr)=\Hscr^0_{\Zscr}(\Fscr^{\vee\vee})=0$. As
    $\Hscr^0_{\Zscr}(\Gscr)=\Gscr$, this shows that $\Hscr^1_{\Zscr}(\Fscr)\neq 0$. Now, it
    follows by applying $p^*$ that $\Fscr$ is not reflexive.
\end{proof}

\section{Reflexivity on gerbes}

We specialize to the case that $\Xscr\rightarrow X$ is an $A$-gerbe where $A$ is a
smooth affine commutative group scheme, and we fix a character $\chi:A\rightarrow\Gm$. In
particular, the natural map $\Hoh^2_{\et}(X,A)\rightarrow\Hoh^2_{\mathrm{fppf}}(X,A)$ is an
isomorphism. It follows that there is an \'etale cover $U\rightarrow X$ and a
section $p:U\rightarrow\Xscr$, which is a smooth atlas. Recall that when $\Xscr\rightarrow
X$ has a section, there is non-canonical equivalence $\Xscr\we\Brm A$, and $\Brm A\cong [X/A]$.

\begin{assumption}
    In this section $X=\Spec R$ is a normal integral noetherian affine scheme, $I\subseteq
    R$ is a proper ideal, and $\Xscr\rightarrow X$ is an $A$-gerbe where $A$ is a smooth
    affine commutative group scheme with a fixed character $\chi:A\rightarrow\Gm$.
\end{assumption}

In this case an $A$-gerbe $\Xscr\rightarrow X$ as above is a normal integral locally
noetherian algebraic stack.
The abelian category $\QCoh(\Xscr_{\fppf})$ is $R$-linear, so we can give an alternate
definition of depth in this case, which we show reduces to the definition in the previous
section.

\begin{definition}
    \begin{enumerate}
        \item We say that $r\in I$ is a \emph{non-zero divisor} on $\Fscr$ if $\ker(\Fscr \xrightarrow{r} \Fscr)=0$.
        \item   A coherent sheaf $\Fscr$ on $\Xscr$ is $R$-torsion-free if
            $r:\Fscr\rightarrow\Fscr$ is injective for all $0\neq r\in R$.
        \item   An $\Fscr$-regular sequence in $I$ is a sequence of
            elements $x_1,\ldots,x_d$ of $I$ such that
            $x_i:\Fscr/(x_1,\ldots,x_{i-1})\Fscr\rightarrow\Fscr/(x_1,\ldots,x_{i-1})\Fscr$ is injective for
            $1\leq i\leq d$.
        \item   The $I$-depth of $\Fscr$ is the maximal length of an $\Fscr$-regular
            sequence in $I$; we denote this integer by $\depth_I\Fscr$.
    \end{enumerate}
\end{definition}

Note that since $R$ is integral, the definition of torsion-free given here is equivalent to
the more standard definition that asks for the stalks $\Fscr_x$ to be torsion-free
$\Oscr_{X,x}$-modules for all points $x$ of $X$.

The next lemma is an exact analogue of a standard fact about modules over commutative rings.

\begin{lemma}
    If $r\in I$ is a non-zero divisor on $\Fscr$, then $\depth_I\Fscr/r=\depth_I\Fscr-1$.
\end{lemma}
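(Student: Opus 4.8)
The plan is to reduce the statement to the classical depth-reduction lemma for coherent sheaves on a scheme by descending along the atlas. First I would record the elementary correspondence between regular sequences that follows straight from the definition: for elements $x_1,\dots,x_d\in I$ there is a canonical identification $(\Fscr/r\Fscr)/(x_1,\dots,x_{i-1})(\Fscr/r\Fscr)\iso\Fscr/(r,x_1,\dots,x_{i-1})\Fscr$, so that $x_1,\dots,x_d$ is an $(\Fscr/r)$-regular sequence in $I$ if and only if $r,x_1,\dots,x_d$ is an $\Fscr$-regular sequence in $I$, the base case being exactly the hypothesis that $r$ is a non-zero divisor on $\Fscr$. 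Prepending $r$ to a maximal $(\Fscr/r)$-regular sequence therefore yields an $\Fscr$-regular sequence that is one longer, which gives the easy inequality $\depth_I\Fscr/r\leq\depth_I\Fscr-1$.

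For the reverse inequality the missing ingredient is that some maximal $\Fscr$-regular sequence in $I$ may be taken to begin with $r$; equivalently, that $\depth_I(\Fscr/x)$ is independent of the chosen non-zero divisor $x\in I$. This is a Rees-type constancy statement, and the cleanest way to obtain it is to transport the whole computation to the atlas $p:U\rightarrow\Xscr$ fixed in the running assumptions, where it becomes the standard fact for coherent sheaves on a scheme. Since $p$ is faithfully flat, $p^*$ is exact and faithful, and the $R$-linear structure commutes with $p^*$; hence multiplication by any $x\in R$ is injective on a quotient $\Fscr/(x_1,\dots,x_{i-1})\Fscr$ if and only if it is injective on $p^*\Fscr/(x_1,\dots,x_{i-1})p^*\Fscr\iso p^*(\Fscr/(x_1,\dots,x_{i-1})\Fscr)$. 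Consequently a sequence in $I$ is $\Fscr$-regular precisely when it is $p^*\Fscr$-regular, so that $\depth_I\Fscr=\depth_I p^*\Fscr$ and, similarly, $\depth_I\Fscr/r=\depth_I p^*\Fscr/r$.

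It then remains to invoke the scheme-theoretic equality $\depth_I p^*\Fscr/r=\depth_I p^*\Fscr-1$ on $U$. As $U$ is a locally noetherian scheme and depth with respect to the fixed ideal $I\OU$ can be checked on stalks at points of $U$, this follows from the standard commutative-algebra statement, for instance via the vanishing characterization of depth in terms of $\Ext^\bullet(\OU/I\OU,-)$ together with the long exact sequence attached to $0\rightarrow p^*\Fscr\xrightarrow{r}p^*\Fscr\rightarrow p^*\Fscr/r\rightarrow 0$ and the fact that $r\in I$ annihilates $\OU/I\OU$.

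The step I expect to require the most care is the faithfully flat transfer of depth in the second paragraph: one must check not merely that individual non-zero-divisor conditions descend, but that the \emph{maximal} length of a regular sequence drawn from the fixed ideal $I$ is preserved under $p^*$, and that the classical lemma is being applied on $U$ with respect to the correct ideal $I\OU$ of global functions rather than the various local maximal ideals. Once this bookkeeping between $I$ and its pullback is settled, the equality follows formally.
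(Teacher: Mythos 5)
Your argument is correct in substance but follows a genuinely different route from the paper's. The paper never descends to the atlas for this lemma: it works intrinsically in the $R$-linear category $\QCoh(\Xscr)$, introducing $\Hscr^0_I(\Fscr)=\bigcap_{r\in I}\ker(\Fscr\xrightarrow{r}\Fscr)$ and its right derived functors, proving by induction the characterization that $\depth_I\Fscr\geq d$ if and only if $\Hscr^i_I(\Fscr)=0$ for $i<d$, and then reading off the depth drop from the long exact sequence attached to $0\rightarrow\Fscr\xrightarrow{r}\Fscr\rightarrow\Fscr/r\rightarrow 0$. You instead push everything to the \'etale atlas $U=\Spec S$ and quote the classical lemma there. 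Two remarks on your route. First, you could not have invoked the paper's Lemma~\ref{lem:depths} for the transfer of depth, since its proof uses the present lemma; but your independent argument --- a sequence drawn from $I$ is $\Fscr$-regular if and only if it is $p^*\Fscr$-regular, by exactness and faithfulness of $p^*$ --- avoids that circularity. Second, the bookkeeping you flag at the end is a genuine point that must be supplied: on $U$ you have to compare the maximal length of a $p^*\Fscr$-regular sequence drawn from the image of $I$ with $\depth_{IS}p^*\Fscr$, where regular sequences may a priori use all of $IS$. This follows from prime avoidance (the image of $I$ in $S$ is closed under addition and multiplication, so if every element of it is a zero divisor on a finitely generated module it lies in some associated prime, and then so does $IS$) together with the classical fact that all maximal regular sequences in $IS$ have the same length. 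With that lemma supplied, your proof is complete. The trade-off is that the paper's intrinsic argument is what allows it to deduce the atlas comparison of Lemma~\ref{lem:depths} afterwards as a consequence, whereas your approach essentially establishes both statements at once but leans on more classical commutative algebra on the cover.
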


\begin{proof}
    Define $\Hscr^0_I(\Fscr)=\cap_{r\in I}\ker(\Fscr\xrightarrow{r}\Fscr)$, and let
    $\Hscr^n_I(\Fscr)$ be the right derived functors of $\Hscr^0_I(\Fscr)$. Note that
    $\depth_I\Fscr=0$ if and only if $\Hscr^0_I(\Fscr)\neq 0$. If $\Hscr^0_I(\Fscr)\neq 0$,
    then by definition every element of $I$ is a zero-divisor on $\Fscr$, whence
    $\depth_I\Fscr=0$. On the other hand, if $\depth_I\Fscr=0$, then
    \begin{equation*}
        I\subseteq\bigcup_{r\in I}\mathrm{ann}\left(\ker(\Fscr\xrightarrow{r}\Fscr)\right).
    \end{equation*}
    It follows that $I\subseteq\mathrm{ann}\left(\ker(\Fscr\xrightarrow{r}\Fscr)\right)$ for
    some $r\in I$. Since $r$ is a zero-divisor on $\Fscr$, it follows that $\Hscr^0_I(\Fscr)\neq 0$.

    Now, we claim that, just as for finitely generated modules over noetherian commutative
    rings, we have $\depth_I\Fscr\geq d$ if and only if $\Hscr^i_I(\Fscr)=0$ for $i<d$. The
    previous argument proves this for $d=1$. If $\Hscr^i_I(\Fscr)=0$ for $i<d$, then for any
    $0\neq r\in I$ we have $\Hscr^i_I(\Fscr/r)=0$ for $i<d-1$ from the long exact sequence in
    local cohomology. It follows inductively that $\depth_I\Fscr\geq d$.

    We are reduced to proving the following. Suppose that $\depth_I\Fscr=d+1$, and assume
    that for all coherent $\Oscr_\Xscr$-modules $\Gscr$ and all $i\leq d$ we have
    $\depth_I\Gscr\geq i$ if and only if $\Hscr^j_I(\Gscr)=0$ for $0\leq j<i$. Then,
    $\Hscr^d_I(\Fscr)=0$. Suppose that $\Hscr^d_I(\Fscr)$ is non-zero. Since this sheaf
    is $I$-torsion, the kernel of multiplication by $r$ is non-zero for any $\Fscr$-regular
    element of $I$. In particular, if $r$ is part of an $\Fscr$-regular sequence of length
    at least $d+1$, then we see that $\Fscr/r$ satisfies $\depth_I\Fscr/r\geq d$, while
    $\Hscr^{d-1}_I(\Fscr/r)\neq 0$. This contradicts the assumptions. The lemma now follows
    from the long exact sequence in local cohomology.
\end{proof}

\begin{lemma}\label{lem:depths}
    Suppose that $U=\Spec S\rightarrow X$ is an \'etale cover with a section
    $p:U\rightarrow\Xscr$. If $\Zscr=\Spec R/I\times_X\Xscr$, then
    $\depth_I\Fscr=\depth_{\Zscr}\Fscr=\depth_{IS}p^*\Fscr$.
\end{lemma}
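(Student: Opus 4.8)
The plan is to prove both equalities by passing to the affine atlas $U=\Spec S$ and invoking the two classical characterizations of depth over a noetherian ring---Grothendieck's via local cohomology and Rees's via $\Ext$---after identifying how $\Zscr$ pulls back. First I would record that, since the composite $U\xrightarrow{p}\Xscr\rightarrow X$ is the \'etale cover $\Spec S\rightarrow\Spec R$, one has
\begin{equation*}
    \Zscr_U:=U\times_\Xscr\Zscr=U\times_X\Spec(R/I)=\Spec(S/IS),
\end{equation*}
so that $\Zscr$ pulls back to the closed subscheme $V(IS)\subseteq U$. Moreover $p$, being a smooth surjective atlas, is faithfully flat, so $p^*$ is exact and conservative: for $\Gscr\in\QCoh(\Xscr)$ we have $\Gscr=0$ if and only if $p^*\Gscr=0$.

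For the equality $\depth_\Zscr\Fscr=\depth_{IS}p^*\Fscr$ I would use that, by construction, $\StHoh^i_\Zscr$ is the sheafification of affine-local local cohomology, so its pullback along the affine atlas is computed by local cohomology along $\Zscr_U$; concretely $p^*\Hscr^i_\Zscr(\Fscr)\iso\Hoh^i_{IS}(p^*\Fscr)$, the base change being harmless because local cohomology commutes with the flat morphism $p$. Faithful flatness then gives $\Hscr^i_\Zscr(\Fscr)=0$ if and only if $\Hoh^i_{IS}(p^*\Fscr)=0$, and since $p^*\Fscr$ is a finitely generated $S$-module with $IS\cdot p^*\Fscr\neq p^*\Fscr$, Grothendieck's theorem identifies $\depth_{IS}p^*\Fscr$ with the least $i$ for which $\Hoh^i_{IS}(p^*\Fscr)\neq 0$. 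Comparing with the definition $\depth_\Zscr\Fscr=\max\{n:\Hscr^i_\Zscr(\Fscr)=0\text{ for }i<n\}$ yields the claim.

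For the equality $\depth_I\Fscr=\depth_{IS}p^*\Fscr$ I would start from the previous lemma, which gives $\depth_I\Fscr=\min\{i:\Hscr^i_I(\Fscr)\neq 0\}$, where $\Hscr^i_I$ are the derived functors of $\Hscr^0_I(\Fscr)=\cap_{r\in I}\ker(\Fscr\xrightarrow{r}\Fscr)=\StHom(\Oscr_\Zscr,\Fscr)$; thus $\Hscr^i_I=\mathscr{E}\mathrm{xt}^i_{\Oscr_\Xscr}(\Oscr_\Zscr,-)$. Since $\Oscr_\Zscr=\Oscr_\Xscr/I\Oscr_\Xscr$ is coherent and $p$ is flat, flat base change for $\mathscr{E}\mathrm{xt}$ gives $p^*\Hscr^i_I(\Fscr)\iso\Ext^i_S(S/IS,p^*\Fscr)$. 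Faithful flatness again detects vanishing, and Rees's theorem identifies $\min\{i:\Ext^i_S(S/IS,p^*\Fscr)\neq 0\}$ with $\depth_{IS}p^*\Fscr$; combining these gives the equality. The inequality $\depth_{IS}p^*\Fscr\geq\depth_I\Fscr$ can alternatively be seen at once by pulling a maximal $\Fscr$-regular sequence in $I$ back along the exact functor $p^*$, with the reverse inequality supplied by the base-change argument above.

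The main obstacle is reconciling the two a priori different cohomology theories on the stack---the geometric $\Hscr^i_\Zscr$ and the algebraic $\Hscr^i_I$---and justifying that each commutes with the atlas pullback in the $R$-linear (rather than honest-module) setting. Both reconciliations route through the affine scheme $U$: the point is that $\Hscr^i_\Zscr$ is computed by the stable Koszul (\v{C}ech) complex on a set of generators of $I$, while $\Hscr^i_I=\mathscr{E}\mathrm{xt}^i(\Oscr_\Zscr,-)$ is computed from a coherent resolution of $\Oscr_\Zscr$, and in each case the exact functor $p^*$ transports the computing complex to the corresponding one on $U$; faithful flatness then lets the classical theorems of Grothendieck and Rees do the rest. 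These two theories differ in higher degrees but share the same minimal nonvanishing degree, which is exactly what makes all three quantities coincide. I would take care that the depth conventions match in the degenerate case where the support of $\Fscr$ misses $\Zscr$, in which all three quantities are $+\infty$.
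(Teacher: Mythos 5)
Your proposal is correct, and for the equality $\depth_{\Zscr}\Fscr=\depth_{IS}p^*\Fscr$ it coincides with the paper's one-line argument (the local cohomology sheaves commute with the flat atlas pullback, and faithful flatness detects vanishing). For the equality $\depth_I\Fscr=\depth_{IS}p^*\Fscr$, however, the two arguments diverge. You identify $\Hscr^0_I(-)=\StHom(\Oscr_{\Zscr},-)$, hence $\Hscr^i_I=\mathscr{E}\mathrm{xt}^i(\Oscr_{\Zscr},-)$, invoke the characterization $\depth_I\Fscr=\min\{i:\Hscr^i_I(\Fscr)\neq 0\}$ established in the preceding lemma, and conclude by flat base change for $\mathscr{E}\mathrm{xt}$ against the coherent sheaf $\Oscr_{\Zscr}$ together with Rees's theorem over $S$. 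The paper instead inducts on $d=\depth_I\Fscr$: the base case $d=0$ is handled by $p^*\Hscr^0_I(\Fscr)\iso\Hscr^0_{IS}(p^*\Fscr)$ and faithful flatness, and the inductive step quotients by an $\Fscr$-regular element $r\in I$ and uses the preceding lemma's conclusion $\depth_I\Fscr/r=\depth_I\Fscr-1$ together with its schematic analogue on $U$. Your route buys a degreewise statement ($p^*$ of each $\Hscr^i_I$ is an $\Ext$ over $S$) at the cost of justifying base change for $\mathscr{E}\mathrm{xt}$ on the stack, which requires local coherent resolutions of $\Oscr_{\Zscr}$; the paper's induction needs no base change beyond degree $0$ but leans on the regular-sequence formalism. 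Both rest on the same two pillars, exactness and conservativity of $p^*$, and your observation that the geometric theory $\Hscr^i_{\Zscr}$ and the algebraic theory $\Hscr^i_I$ agree only in the minimal nonvanishing degree is precisely the content of the lemma; your explicit treatment of the degenerate case $I\Fscr=\Fscr$ is a detail the paper omits.
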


\begin{proof}
    Since $\depth_{\Zscr}\Fscr$ is computed using the local cohomology sheaves, and as $p$
    is faithfully flat, it follows
    that $\depth_{\Zscr}\Fscr=\depth_{IS}p^*\Fscr$. So, we will
    prove by induction on $d=\depth_I\Fscr$ that $\depth_I\Fscr=\depth_{IS}p^*\Fscr$.

    If $d=0$, so that $\Fscr$ is $I$-torsion, we have that $\Hscr^0_I(\Fscr)\rightarrow\Fscr$
    is an isomorphism. But,
    then by faithful flatness, we have that $p^*\Hscr^0_I(\Fscr)\rightarrow p^*\Fscr$ is an
    isomorphism. But, $p^*\Hscr^0_I(\Fscr)\iso\Hscr^0_{IS}(p^*\Fscr)$. That is, $\depth_{IS}p^*\Fscr=0$.

    So, assume that the lemma is true for all coherent sheaves on $\Xscr$ with depth at most
    $d$, and assume that $\depth_I\Fscr=d+1$. Let $r$ be a non-zero divisor on $\Fscr$ in
    $I$. Then, $\depth_I\Fscr/r=\depth_{IS}p^*(\Fscr/r)=\depth_{IS}(p^*\Fscr)/r$. The lemma
    now follows from the fact that $\depth_I\Fscr=1+\depth_I\Fscr/r$ and
    $\depth_{IS}(p^*\Fscr)/r=1+\depth_{IS}p^*\Fscr$.
\end{proof}

Putting this all together, we prove the following proposition.

\begin{proposition}
    Let $\Xscr\rightarrow X$ be an $A$-gerbe on a normal integral noetherian affine scheme
    $X=\Spec R$, and let $\chi:A\rightarrow\Gm$ be a character.
    Then, a coherent $\chi$-twisted $\Oscr_\Xscr$-module $\Fscr$ is reflexive if and only if it is
    torsion-free and $\depth_P\Fscr\geq 2$ for all prime ideals $P$ such that
    $\height P\geq 2$.
\end{proposition}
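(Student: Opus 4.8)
The plan is to reduce the statement to Hartshorne's criterion from Proposition~\ref{prop:stackyhart} and then to run a purely commutative-algebraic reduction from general substacks to prime ideals. Since an $A$-gerbe is a normal integral locally noetherian algebraic stack, Proposition~\ref{prop:stackyhart} applies to the coherent sheaf $\Fscr$, so $\Fscr$ is reflexive if and only if it is torsion-free and $\Hscr^1_\Zscr(\Fscr)=0$ for every closed substack $\Zscr\subseteq\Xscr$ with $\codim_\Xscr\Zscr\geq 2$. The entire content of the proposition is therefore the translation of this vanishing condition into the depth condition on primes of height $\geq 2$.

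First I would cut down the class of substacks that must be tested. The functors $\Hscr^i_\Zscr$ depend only on the reduced closed substack underlying $\Zscr$, and the gerbe $\Xscr\rightarrow X$ induces a homeomorphism $|\Xscr|\iso|X|$; hence it suffices to range over substacks of the form $\Zscr=\Spec R/I\times_X\Xscr$ for ideals $I\subseteq R$ with $\codim_X V(I)\geq 2$. For such a $\Zscr$, the definition of stack depth gives $\depth_\Zscr\Fscr\geq 2$ if and only if $\Hscr^0_\Zscr(\Fscr)=\Hscr^1_\Zscr(\Fscr)=0$, and a torsion-free sheaf already satisfies $\Hscr^0_\Zscr(\Fscr)=0$ for every proper $\Zscr$. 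Combining this with Lemma~\ref{lem:depths}, which identifies $\depth_\Zscr\Fscr$ with $\depth_I\Fscr$, the criterion of Proposition~\ref{prop:stackyhart} becomes: $\Fscr$ is reflexive if and only if it is torsion-free and $\depth_I\Fscr\geq 2$ for every ideal $I$ with $\codim_X V(I)\geq 2$.

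It remains to prove the equivalence of this ideal-indexed condition with the prime-indexed one in the statement. The implication from ideals to primes is immediate, since a prime $P$ of height $\geq 2$ is an ideal with $\codim_X V(P)\geq 2$. For the converse the key input is the standard identity
\[
    \depth_I\Fscr=\inf\{\,\depth\Fscr_Q : Q\in V(I)\,\},
\]
expressing the $I$-depth as the infimum of the local depths of the localizations $\Fscr_Q$ over $\Oscr_{X,Q}$; taking $I=P$ shows that $\depth_P\Fscr\geq 2$ forces $\depth\Fscr_Q\geq 2$ for every $Q\in V(P)$. Chasing these through, all three conditions---that $\depth_I\Fscr\geq 2$ for every codimension-$\geq 2$ ideal $I$, that $\depth\Fscr_Q\geq 2$ for every prime $Q$ of height $\geq 2$, and that $\depth_P\Fscr\geq 2$ for every prime $P$ of height $\geq 2$---are seen to coincide, using only that every prime containing a prime of height $\geq 2$ again has height $\geq 2$, and that every prime in $V(I)$ has height at least $\codim_X V(I)$.

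The main obstacle I anticipate is making this localization argument and the grade-equals-infimum-of-local-depths identity rigorous for a $\chi$-twisted sheaf, which is $R$-linear but not literally a module over $R$. The cleanest remedy is to pull everything back along the \'etale atlas $p:U=\Spec S\rightarrow\Xscr$ furnished by the gerbe: there $p^*\Fscr$ is an honest coherent module over the normal noetherian ring $S$, where the cited commutative-algebra facts are standard. One then transports the conclusion back to $\Xscr$ using Lemma~\ref{lem:depths} and the fact that the \'etale map $U\rightarrow X$ preserves the heights of local rings, so that the codimension and height conditions match on both sides.
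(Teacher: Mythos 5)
Your argument is correct and follows essentially the same route as the paper: both directions reduce to Proposition~\ref{prop:stackyhart} together with the depth comparison of Lemma~\ref{lem:depths} on the \'etale atlas, with heights matched using going-down and the fact that \'etale maps preserve heights. The only organizational difference is that in the sufficiency direction the paper localizes the cokernel of $\Fscr\rightarrow\Fscr^{\vee\vee}$ at a prime of its support rather than invoking the grade formula $\depth_I\Fscr=\inf\{\depth\Fscr_Q:Q\in V(I)\}$, but the content is the same.
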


\begin{proof}
    The necessity follows immediately from Lemma~\ref{lem:depths} since $p^*\Fscr$ is
    reflexive for any smooth atlas $p:U=\Spec S \rightarrow\Xscr$. Suppose that $\Fscr$
    is torsion-free and $\depth_P\Fscr\geq 2$ for all primes $P$ with $\height P\geq 2$.
    We let $\Gscr$ be the cokernel of the injective map
    $\Fscr\rightarrow\Fscr^{\vee\vee}$; it is another $\chi$-twisted coherent sheaf, and $\Gscr$ has support consisting of primes of
    height at least $2$. For these primes $P$, we can use the faithful flatness of $R_P\rightarrow
    S_{PS}$ to argue that $\Hscr^1_P(\Fscr)\neq 0$. Indeed, $S_{PS}$ is a semi-local ring
    faithfully flat and \'etale over $R_P$. As $\Gscr_P\neq 0$, it follows that $q^*\Gscr_P$ is non-zero, where
    $q:\Xscr_S\rightarrow\Xscr$. But, the section $p:U\rightarrow\Xscr$ induces a map
    $r:U\rightarrow\Xscr_S$ that induces an equivalence
    $r^*:\QCoh^\chi(\Xscr_S)\rightarrow\QCoh(U)$. Since $p=r\circ q$, it follows that
    $p^*\Gscr_P$ is a non-zero coherent sheaf on $S_{PS}$. Therefore, for some maximal ideal $Q$
    of $S_{PS}$, which necessarily satisfies $\height Q\geq 2$ by the going-down theorem for
    flat extensions~\cite{matsumura}*{Theorem 9.5}, we
    have $(p^*\Gscr_P)_Q\neq 0$. Hence, $\Fscr$ is not reflexive, by
    Proposition~\ref{prop:stackyhart}.
\end{proof}

Using the proposition, we can prove a twisted form of the Auslander-Buchsbaum
formula.

\begin{definition}
    Let $X=\Spec R$ be an affine scheme, $\Xscr\rightarrow X$ an $A$-gerbe where $A$ is a
    smooth affine $X$-group scheme, and $\chi:A\rightarrow\Gm$ a character. Then, a
    $\chi$-twisted quasi-coherent sheaf $\Fscr$ has \emph{homological dimension $\leq n$} if
    $\Ext^i(\Fscr,\Gscr)=0$ for all $i>n$ and all quasi-coherent $\chi$-twisted sheaves $\Gscr$.
    Write $\hdim\Fscr$ for the homological dimension of $\Fscr$, the
    smallest $n$ such that $\Fscr$ has homological dimension $\leq n$.
\end{definition}

\begin{theorem}\label{thm:tab}
    Suppose that $\Xscr\rightarrow X$ is an $A$-gerbe where $X=\Spec R$ is the spectrum of
    an integral noetherian local ring with maximal ideal $M$.
    If $\Fscr$ is a $\chi$-twisted sheaf with finite homological dimension, then
    \begin{equation*}
        \hdim\Fscr+\depth_M\Fscr=\depth_M R.
    \end{equation*}
\end{theorem}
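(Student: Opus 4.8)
The plan is to reduce Theorem~\ref{thm:tab} to the classical Auslander--Buchsbaum formula for a finitely generated module of finite projective dimension over a noetherian local ring. The bridge is the \'etale-local trivialization of the gerbe: after passing to a suitable \'etale cover I may replace the $\chi$-twisted sheaf $\Fscr$ by an honest finitely generated module over a local ring $S'$, and each of the three terms $\hdim\Fscr$, $\depth_M\Fscr$, and $\depth_M R$ will be matched with its classical counterpart over $S'$. An alternative would be to mimic the classical proof by induction on $\depth_M R$ using the twisted depth lemmas, but the base-change route reuses the machinery already in place.

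First I would fix the local ring to work over. Since $\Xscr\rightarrow X$ is an $A$-gerbe with $A$ smooth affine, there is an \'etale cover $\Spec S\rightarrow X$ carrying a section $p:\Spec S\rightarrow\Xscr$. Choose a maximal ideal $Q$ of $S$ lying over $M$ and set $S'=S_Q$. Then $R\rightarrow S'$ is a faithfully flat local homomorphism which is \'etale, hence unramified, so $MS'=QS'$ is the maximal ideal of $S'$ and the residue field $S'/MS'$ is a separable field extension of $R/M$. The section $p$ restricts over $\Spec S'$, trivializing $\Xscr_{S'}\rightarrow\Spec S'$, and pullback along this section induces an equivalence $\QCoh^\chi(\Xscr_{S'})\xrightarrow{\sim}\QCoh(\Spec S')$ of $R$-linear abelian categories (as in the proof of the reflexivity proposition above). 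Under this equivalence $p'^*\Fscr$ corresponds to a finitely generated $S'$-module. I would then establish the dictionary. For depth, Lemma~\ref{lem:depths}, applied to the faithfully flat cover $\Spec S'$, gives $\depth_M\Fscr=\depth_{MS'}p'^*\Fscr$, which is the ordinary depth $\depth_{S'}p'^*\Fscr$ because $MS'$ is the maximal ideal. For the ring, faithful flatness of $R\rightarrow S'$ and the flat-extension formula $\depth_{S'}S'=\depth_M R+\depth_{S'/MS'}(S'/MS')$ give $\depth_{S'}S'=\depth_M R$, since the field $S'/MS'$ has depth $0$.

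The remaining identity $\hdim\Fscr=\mathrm{pd}_{S'}(p'^*\Fscr)$ is the step I expect to be the main obstacle, as it requires comparing homological dimensions across the descent. Here I would invoke flat base change for $\Ext$: since $\Fscr$ is coherent of finite homological dimension, for every $\chi$-twisted $\Gscr$ there is a natural isomorphism
\begin{equation*}
    \Ext^i_{\Xscr}(\Fscr,\Gscr)\otimes_R S'\iso\Ext^i_{\Xscr_{S'}}(q^*\Fscr,q^*\Gscr)\iso\Ext^i_{S'}(p'^*\Fscr,p'^*\Gscr),
\end{equation*}
where $q:\Xscr_{S'}\rightarrow\Xscr$ and the second isomorphism is the equivalence above. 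As every $S'$-module has the form $p'^*\Gscr$, if $\hdim\Fscr\leq n$ then $\Ext^i_{S'}(p'^*\Fscr,-)=0$ for $i>n$, so $\mathrm{pd}_{S'}(p'^*\Fscr)\leq n$; conversely, if $\mathrm{pd}_{S'}(p'^*\Fscr)\leq n$ then the displayed groups vanish for $i>n$, and faithful flatness of $R\rightarrow S'$ forces $\Ext^i_{\Xscr}(\Fscr,\Gscr)=0$ for all twisted $\Gscr$, whence $\hdim\Fscr\leq n$.

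Finiteness of $\hdim\Fscr$ then guarantees that $p'^*\Fscr$ has finite projective dimension over the noetherian local ring $S'$, so the classical Auslander--Buchsbaum formula applies:
\begin{equation*}
    \mathrm{pd}_{S'}(p'^*\Fscr)+\depth_{S'}(p'^*\Fscr)=\depth_{S'}S'.
\end{equation*}
Substituting the three identities established above yields $\hdim\Fscr+\depth_M\Fscr=\depth_M R$, completing the proof.
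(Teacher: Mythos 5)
Your proof takes essentially the same route as the paper's, which simply asserts that all three quantities are stable under the \'etale splitting $\Spec S\rightarrow\Spec R$ and the equivalence $\QCoh^\chi(\Xscr_S)\iso\QCoh(\Spec S)$; you have supplied the details the paper omits, and they check out. The one imprecision is the claim that every $S'$-module has the form $p'^*\Gscr$ for $\Gscr$ on $\Xscr$ --- pullback along $q:\Xscr_{S'}\rightarrow\Xscr$ is not essentially surjective --- but this is harmless, since to bound $\mathrm{pd}_{S'}(p'^*\Fscr)$ it suffices to test $\Ext$ against the residue field, or one can instead use adjunction with the exact pushforward along the affine map $q$, which carries $\chi$-twisted quasi-coherent sheaves to $\chi$-twisted quasi-coherent sheaves.
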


\begin{proof}
    This follows immediately by using an \'etale splitting $\Spec S\rightarrow\Spec R$ for
    the gerbe $\Xscr$. All three numbers are stable under faithfully flat \'etale maps, and
    over $\Spec S$ there is an equivalence of categories $\QCoh^\chi(\Xscr_S)\iso\QCoh(\Spec S)$.
\end{proof}

\section{The proof}

%
%

We prove the main theorem of the paper.

\begin{proof}[Proof of Theorem~\ref{thm:existence}]
    By Yu's result, any order on $X$ is contained in a maximal order. 
    By assumption, there exists a regular point $p$ of codimension $\ge 3$.
    Therefore, if we construct a
    non-Azumaya maximal order in $A$ over $\Spec\Oscr_{X,p}$ and extend it to $X$, it is
    contained in a maximal order that is not Azumaya at $p$. Thus, we now assume that
    $X=\Spec R$ is a regular local ring of dimension at least $3$ with field of fractions $K$ and that $A$ is a
    central simple algebra with unramified Brauer class $\alpha\in\Br(X)\subseteq\Br(K)$.

    We distinguish two cases. If $\alpha=0$ in $\Br(K)$, then we need to construct a non-Azumaya
    maximal order in the matrix algebras $\Mrm_n(K)$ for $n>1$. If $R$ is exactly
    $3$-dimensional, then the first syzygy of a minimal free resolution of $R/M$, where $M$
    is the maximal ideal, is a non-locally free reflexive $R$-module of rank $2$. If $R$ is
    of dimension more than $3$, then one can extend a syzygy such as the one above from a
    $3$-dimensional localization. The upshot is that if $R$ is a regular local ring of
    dimension at least $3$, then there are reflexive but not locally free $R$-modules of any
    rank more than $1$. Taking the endomorphisms of these we gain non-Azumaya maximal orders
    in $\Mrm_n(K)$ for all $n>1$ by~\cite{auslander-goldman-maximal}*{Theorem 4.4} using the
    fact that the Azumaya locus of a maximal order in an unramified central simple algebra
    is the locally free locus~\cite{auslander-goldman}*{Theorem~2.1, Proposition~4.6}.

    If $\alpha\in\Br(K)$ is non-zero, then by Wedderburn's
    theorem we can assume that $A$ is a division algebra of degree at least $2$. Indeed,
    given a non-Azumaya maximal order $\Ascr$ in a division algebra $A$, $\Mrm_n(\Ascr)$ is
    a non-Azumaya maximal order in $\Mrm_n(A)$.
    Wedderburn's theorem tells us that we can assume that $A$ is a division algebra.
    Moreover, by work of Panin on purity~\cite{panin-purity}, we know that there is an Azumaya maximal order
    $\Ascr$ in $A$ over $X$.

%

    Let $g:\Xscr\rightarrow X$ be a $\Gm$-gerbe with obstruction class $\alpha$ (for
    background on $\Gm$-gerbes and $\Xscr$-twisted sheaves, see Lieblich~\cite{lieblich}). It is the
    gerbe of trivializations of $\Ascr$.
    There is a locally free $\Xscr$-twisted sheaf $\Fscr$ such that $\StEnd(\Fscr)\iso
    g^*\Ascr$.
    We will construct a non-locally free $\Xscr$-twisted sheaf $\Escr$ of the same rank as
    $\Fscr$. Consider elements $f,g\in\Ascr$, and assume that they are not both
    zero. We define $\Escr$ as the cokernel
    \begin{equation}
        \label{defEscr}
        0\rightarrow\Fscr\xrightarrow{\begin{pmatrix}f\\g\end{pmatrix}}\Fscr^2\rightarrow\Escr\rightarrow 0.
    \end{equation}
    Since $f,g$ are not both zero, and since $A$ is a division ring,
    the map $\Fscr\rightarrow\Fscr^2$ is injective.
    Now, we search for satisfiable conditions on the pair $f,g$ that ensures that
    $\Escr$ is reflexive but not locally free. The latter is easy: it suffices to assume
    that we cannot solve $af+bg=1$ for $a,b$ in $\Ascr$. In other words, since $A$
    is a division algebra, we assume that $(f,g)$ is contained in the maximal ideal
    $M\Ascr$. In this case, $\Escr$ has homological dimension $1$.

    Now, $\Escr$ is torsion-free if and only if the two-sided ideal $(f,g)$ is not contained in
    $P\Ascr$ for any height $1$ prime $P$ of $R$. To prove this, note that by the
    snake lemma there is an exact sequence
    \begin{equation*}
        0\rightarrow\ker(r:\Escr\rightarrow\Escr)\rightarrow\Fscr/r\xrightarrow{\begin{pmatrix}f\\g\end{pmatrix}}\Fscr^2/r
    \end{equation*}
    for any $r\in R$. Thus, if $(f,g)\subseteq(r)$, the kernel is non-zero. On the other
    hand, if $(f,g)$ is not contained in $(r)$ for some irreducible $r$, then, say, $f$ is a non-zero
    section of $\Ascr/(r)$, and it follows that
    $\ker(r:\Escr\rightarrow\Escr)=0$ since the reduced norm $\mathrm{Nrd}(f)$ does not vanish identically
    along $\Spec R/r\subseteq X$.

    Finally, suppose that $\height P\geq 2$. We must ensure that
    $\depth_P\Escr_P\geq 2$. If $\height P>2$, then this holds by the twisted
    Auslander-Buchschbaum formula of Theorem~\ref{thm:tab} since $\hdim\Escr_P\leq 1$.
    In other words, we must ensure that $\Escr_P$ is locally free for all height $2$ primes
    $P$. This occurs if and only if $(f,g)$ is not contained
    in $P\Ascr$ for a height $2$ prime of $R$. In other words, at least one of $f$
    or $g$ needs to be a unit in $\Ascr_P$ for all height $2$ primes $P$.

    As $\deg(A)\geq 2$, there are two non-commuting units $x,y$ of $\Ascr$. Let
    $t_1,t_2,t_3$ be elements of $R$ such that $I=(t_1,t_2,t_3)$ has codimension at least
    $3$. Then for $f=t_1x+t_2$ and $g=t_3y$, the $\Xscr$-twisted sheaf $\Escr$ defined in 
    (\ref{defEscr}) is reflexive and not locally free. The maximal order $\StEnd(\Escr)$ is then
    non-Azumaya. Indeed, we can take an \'etale cover $Y \to X$ which splits $\alpha$.
    Then $f^{\ast}\StEnd_X(\Escr) \cong \StEnd_{Y}(f^{\ast}\Escr)$. Since $f$ is 
    faithfully flat, $f^{\ast}\Escr$ is reflexive (or locally free) if and only if $\Escr$ is too.
    Since $\alpha$ is trivial on $Y$, there is an $\alpha^{-1}$-twisted line bundle $\Lscr$ such that 
    tensoring by this line bundle gives an equivalence of categories
    $\Coh^{\alpha}(Y\times_X\Xscr)\to \Coh(Y)$.
    Then $\StEnd_{Y}(f^{\ast}\Escr)\cong \StEnd_{Y}(f^{\ast}\Escr\otimes \Lscr)$. 
    By~\cite{auslander-goldman-maximal}*{Theorem~4.4}, the latter
    is not locally free, since $f^{\ast}\Escr\otimes \Lscr$ is reflexive but not locally free.
    Hence $\StEnd_X(\Escr)$ is not locally free.
\end{proof}

The underlying reason for our ability to construct these examples is that the
\emph{vanishing locus} of a non-central section $x$ of an Azumaya algebra $\Ascr$ can be smaller than a
hypersurface. For instance, in the notation above, the vanishing locus of $f$ has
codimension $2$.

\section{Surfaces}

Now we prove our converse to the result of Auslander and Goldman on maximal orders on
regular surfaces.

\begin{proof}[Proof of Theorem~\ref{thm:surfaces}]
Assume that $X$ is regular. Any maximal order $\Ascr$ is reflexive, hence is locally free since
$X$ is regular of dimension $2$. Therefore, $\Ascr$ is Azumaya exactly where it is
unramified by~\cite{auslander-goldman}*{Proposition~4.6}.

Assume that $X$ is not regular, we will construct a non-Azumaya maximal order
in an unramified central simple algebra. To begin, we can assume that $X=
\Spec R$, where $R$ is a $2$-dimensional noetherian local domain.

If $R$ is not normal, let $R \rightarrow S$ be the integral closure of $R$ in
$K$. Then, $S$ is a maximal order in $K$ over $\Spec R$. As normalization is
never flat if it is non-trivial, it follows that $S$ is not locally free over
$\Spec R$, and hence not Azumaya.

Now, assume that $R$ is in addition normal. Since $\dim  X=2$, we can assume
$X$ has isolated singularities so the singular locus of $X$ is the closed
point. By a theorem of Buchweitz~\cite{buchweitz-thesis}, we have $\mathrm{D}_{\mathrm{sg}} ( X )
\cong \underline{\mathrm{MCM}} ( R )$. The triangulated category $\mathrm{D}_{\mathrm{sg}} ( X )$ is the Verdier
quotient $\mathrm{D}^{b} ( \mathrm{Coh} ( X ) ) / \mathrm{Perf} ( X )$ where
$\mathrm{Perf} ( X )$ is the full subcategory of perfect complexes. Hence
$\mathrm{D}_{\mathrm{sg}} ( X )$ is trivial if and only if $X$ is regular. The
category $\underline{\mathrm{MCM}} ( R )$ is a triangulated category whose
objects are maximal Cohen-Macaulay $R$-modules and morphisms are $R$-module
morphisms modulo those which factor through a projective module. 
 Since $R$ is not regular, there exists a
non-projective maximal Cohen-Macaulay $R$-module $M$. Let $\Ascr= \mathrm{End}_{R} (
M )$. Now $\dim  X=2$, so $M$ is reflexive as an $R$-module. By 
~\cite[Proposition 4.1]{auslander-goldman-maximal}, 
$\Ascr$ is reflexive. Moreover, $M$ is locally free in
codimension $1$, hence $\Ascr$ is maximal in codimension $1$. So $\Ascr$ is a maximal
order. The order $\Ascr$ is in $\mathrm{End}_{K} ( V,V )$ where $V=M \otimes_{R} K$,
and hence is unramified.

The maximal order $\Ascr$ might be Azumaya. To produce a maximal order which is
not Azumaya, consider the order
\begin{eqnarray*}
  \Ascr' & = & \mathrm{End}_{R} ( R \oplus M ) \simeq \left(\begin{array}{cc}
    R & M^{\ast}\\
    M & \Ascr
  \end{array}\right) .
\end{eqnarray*}
Again $\Ascr'$ is unramified since $\Ascr'$ is contained in
$\mathrm{End}_{K} ( K \oplus V )$. It is reflexive since $\Ascr,M,M^{\ast}$ are all
reflexive as $R$-modules. Finally, $M,M^{\ast}$ are free in codimension $1$,
hence $\Ascr'$ is maximal in codimension $1$. This shows that $\Ascr'$ is again a maximal
order. However, $\Ascr'$ is not Azumaya since $M$ is not locally free. 

\end{proof}

%
%
%
%
%
%

\begin{bibdiv}
\begin{biblist}

\bib{aw4}{article}{
    author = {Antieau, Benjamin},
    author = {Williams, Ben},
    title = {On the non-existence of Azumaya maximal orders},
    journal = {Invent. Math.},
    volume = {197},
    number = {1},
    pages = {47--56},
    year = {2014},
}
\bib{auslander-goldman-maximal}{article}{
    author={Auslander, Maurice},
    author={Goldman, Oscar},
    title={Maximal orders},
    journal={Trans. Amer. Math. Soc.},
    volume={97},
    date={1960},
    pages={1--24},
    issn={0002-9947},
}

\bib{auslander-goldman}{article}{
author={Auslander, Maurice},
author={Goldman, Oscar},
title={The Brauer group of a commutative ring},
journal={Trans. Amer. Math. Soc.},
volume={97},
date={1960},
pages={367--409},
issn={0002-9947},
}

\bib{buchweitz-thesis}{article}{
    author={Buchweitz, Ragnar-Olaf},
    title={Maximal Cohen-Macaulay Modules and Tate-Cohomology Over Gorenstein Rings},
    eprint={https://tspace.library.utoronto.ca/handle/1807/16682},
    publisher = {University of Hannover}
    date={1986},
}

\bib{hartshorne-local}{book} {
	AUTHOR = {Hartshorne, Robin},
	TITLE = {Local cohomology},
	SERIES = {A seminar given by A. Grothendieck, Harvard University, Fall},
	VOLUME = {1961},
	PUBLISHER = {Springer-Verlag, Berlin-New York},
	YEAR = {1967},
	PAGES = {vi+106},
}

\bib{hartshorne-reflexive}{article}{
    author={Hartshorne, Robin},
    title={Stable reflexive sheaves},
    journal={Math. Ann.},
    volume={254},
    date={1980},
    number={2},
    pages={121--176},
    issn={0025-5831},
}

\bib{laumon-moret-bailly}{book}{
    author={Laumon, G{\'e}rard},
    author={Moret-Bailly, Laurent},
    title={Champs alg\'ebriques},
    series={Ergebnisse der Mathematik und ihrer Grenzgebiete. 3. Folge. A
        Series of Modern Surveys in Mathematics},
    volume={39},
    publisher={Springer-Verlag},
    place={Berlin},
    date={2000},
    pages={xii+208},
    isbn={3-540-65761-4},
    review={\MR{1771927 (2001f:14006)}},
}
\bib{lieblich}{article}{
    author={Lieblich, Max},
    title={Twisted sheaves and the period-index problem},
    journal={Compos. Math.},
    volume={144},
    date={2008},
    number={1},
    pages={1--31},
    issn={0010-437X},
}

\bib{matsumura}{book}{
    author={Matsumura, Hideyuki},
    title={Commutative ring theory},
    series={Cambridge Studies in Advanced Mathematics},
    volume={8},
    edition={2},
    note={Translated from the Japanese by M. Reid},
    publisher={Cambridge University Press},
    place={Cambridge},
    date={1989},
    pages={xiv+320},
    isbn={0-521-36764-6},
}

\bib{panin-purity}{article}{
    author={Panin, I. A.},
    title={Purity conjecture for reductive groups},
    journal={Vestnik St. Petersburg Univ. Math.},
    volume={43},
    date={2010},
    number={1},
    pages={44--48},
    issn={1063-4541},
}

\bib{stacks-project}{article}{
    author       = {The {Stacks Project Authors}},
    title        = {\itshape Stacks Project},
    eprint = {http://stacks.math.columbia.edu},
    year         = {2014},
}

\bib{yu}{article}{
    author={Yu, Chia-Fu},
    title={On the existence of maximal orders},
    journal={Int. J. Number Theory},
    volume={7},
    date={2011},
    number={8},
    pages={2091--2114},
    issn={1793-0421},
}

\end{biblist}
\end{bibdiv}

\end{document}